\theoremstyle{plain}
\newtheorem{theorem}{Theorem}
\newtheorem*{citedtheoremA}{Theorem A}
\newtheorem*{citedtheoremB}{Theorem B}
\newtheorem*{citedtheoremC}{Theorem C}
\newtheorem*{citedtheoremD}{Theorem D}
\newtheorem*{lemma}{Lemma}
\begin{document}

\title{Blaschke Products with Derivative in Function Spaces}

\author{David Protas}
\address{Department of Mathematics \\
         California State University \\
         Northridge, California 91330}
\email{david.protas@csun.edu}

\keywords{Blaschke product, Hardy space, Bergman space, separated, uniformly discrete, uniformly separated, interpolating sequence}
\subjclass[2010]{Primary: 30J10; Secondary: 30H20, 30H10}
\date{}

\begin{abstract}
Let $B$ be a Blaschke product with zeros $\{a_n\}$. If $B' \in A^p_{\alpha}$ for certain $p$ and $\alpha$, it is shown that $\sum_n (1 - |a_n|)^{\beta} < \infty$ for appropriate values of $\beta$. Also, if $\{a_n\}$ is uniformly discrete and if $B' \in H^p$ or $B' \in A^{1+p}$ for any $p \in (0,1)$, it is shown that $\sum_n (1 - |a_n|)^{1-p} < \infty$.
\end{abstract}

\maketitle

\section{Preliminaries}
If $f$ is analytic in the open unit disc $U$ and $0 < p < \infty$, then
\[M_p(r; f) = \left\{\frac{1}{2\pi} \int_0^{2\pi} |f(re^{it})|^p\,dt\right\}^{1/p}\]
is defined for each positive $r < 1$. The Hardy space $H^p$ is the set of all functions $f$ analytic in $U$ for which $\|f\|_{H^p} = \sup_{0<r<1} M_p(r; f)$ is finite. Let $dA(z)$ denote Lebesgue area measure. If $f$ is analytic in $U$, $0 < p < \infty$ and $\alpha > -1$, then $f$ is said to be in the space $A_{\alpha}^p$ if 
\[\|f\|_{A_{\alpha}^p} = \left\{\frac{1}{\pi} \iint_U |f(re^{it})|^p (1-r)^{\alpha}\,dA(z)\right\}^{1/p}\] 
is finite. Put $A^p = A_0^p$.

If $\{a_n\}$ is a sequence of complex numbers such that $0<|a_n|<1$ for all $n = 1,2, \ldots$ and $\sum_{n = 1}^{\infty} (1-|a_n|) < \infty$, the Blaschke product
\[B(z) = \prod_{n = 1}^{\infty} \frac{\bar{a}_n}{|a_n|} \frac{a_n - z}{1 - \bar{a}_nz}\]
is an analytic function in $U$ with zeros $\{a_n\}$. A sequence $\{a_n\}$ of points in $U$ is said to be separated or uniformly discrete if there is a constant $\delta > 0$ such that $ \rho(a_m,a_n) \ge \delta$ for all $m \ne n$, where $\rho$ is the pseudohyperbolic metric in $U$ and is given by
\[ \rho(z,w) = \left|\frac{z-w}{1-\bar{w}z}\right|,\quad z, w \in U. \]
The sequence $\{a_n\}$ is said to be uniformly separated if there is a constant $\delta > 0$ such that
\begin{equation} \label{unifsep}
\inf_n \prod_{m \ne n} \rho(a_m,a_n) \ge \delta.
\end{equation}
A Blaschke product whose zeros are uniformly separated is called an interpolating Blaschke product. It is clear that uniformly separated sequences form a proper subset of the set of all uniformly discrete sequences.

We will be considering the relationship between the condition $\sum_n (1-|a_n|)^{\beta} < \infty$ and the inclusion of $B'$ in various spaces $A_{\alpha}^p$ and $H^p$. Since we are interested only in infinite Blaschke products, we can restrict our attention to $0 < \beta \le 1$.

For any $h(r) \ge 0$, we write $h(r) = o(1/(1-r)^l)$ if $\lim_{r\to 1^-} h(r)(1-r)^l = 0$, and $h(r) = O(1/(1-r)^l)$ if $h(r)(1-r)^l$ is bounded for $r \in (0,1)$. Also, we write $K_1 \gtrsim K_2$ if there exists a constant $C>0$ such that $K_1 \ge CK_2$, and we write $K_1 \asymp K_2$ if $K_1 \gtrsim K_2$ and $K_2 \gtrsim K_1$.

\section{Blaschke Sequences}

In \cite{Ki}, H.~Kim proved that if $\alpha > -1$, $\max((\alpha + 2)/2, \alpha + 1) < p < \alpha +2$, and  $B$ is a Blaschke product with zeros $\{a_n\}$ such that 
\[ \sum_n (1-|a_n|)^{2 - p + \alpha} < \infty, \]
then $B' \in A^p_{\alpha}$. In the opposite direction, P.~Ahern and D.~Clark proved in \cite{AC2} that if $-1 < \alpha < -1/2$, $p = 1$, and $B' \in A^p_{\alpha}$, then $\sum_n (1-|a_n|)^{\beta} < \infty$ for all $\beta > (1 + \alpha)/(-\alpha)$. We generalize this to other values of  $p$.

\begin{theorem} \label{sufficient} Suppose $-1 < \alpha < -1/2$, $3/2 + \alpha < p \le 1$, and  $B' \in A^p_{\alpha}$. Then
\begin{equation} \label{betaSum}
\sum_n (1-|a_n|)^{\beta} < \infty
\end{equation}
for all $\beta > \frac{2 - p + \alpha}{p - \alpha -1}$.
\end{theorem}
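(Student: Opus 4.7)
The plan is to reduce the statement to the case $p=1$ treated by Ahern and Clark in \cite{AC2}. First, I would rewrite the target threshold in the form
\[
\frac{2-p+\alpha}{p-\alpha-1} = \frac{1+\alpha'}{-\alpha'}, \qquad \alpha' := 1+\alpha-p,
\]
which is exactly the Ahern--Clark threshold with parameter $\alpha'$ in place of $\alpha$. So it suffices to show that $B' \in A^1_{\alpha'}$ and then apply the Ahern--Clark theorem at the parameter $\alpha'$.

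The bridge from $A^p_\alpha$ to $A^1_{\alpha'}$ comes from exploiting the fact that $B'$ is not an arbitrary member of $A^p_\alpha$, but the derivative of an inner function. The Schwarz--Pick inequality gives
\[
(1-|z|^2)\,|B'(z)| \le 1 - |B(z)|^2 \le 1,
\]
so $|B'(z)|^{1-p} \lesssim (1-|z|)^{-(1-p)}$ for $0 < p \le 1$. Multiplying by $|B'(z)|^p(1-|z|)^\alpha$ yields the pointwise bound
\[
|B'(z)|\,(1-|z|)^{\alpha+1-p} \;\lesssim\; |B'(z)|^p\,(1-|z|)^\alpha,
\]
and integrating over $U$ with respect to $dA$ shows $\|B'\|_{A^1_{\alpha'}} \lesssim \|B'\|_{A^p_\alpha}^p < \infty$. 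Thus $B' \in A^1_{\alpha'}$.

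Finally, I would verify that the Ahern--Clark hypotheses are met at $\alpha'$. Since $p \le 1$, $\alpha' = 1+\alpha-p \ge \alpha > -1$. The condition $\alpha' < -1/2$ is equivalent to $p > 3/2 + \alpha$, which is the standing assumption. Applying \cite{AC2} at $\alpha'$ delivers $\sum_n (1-|a_n|)^\beta < \infty$ for every $\beta > (1+\alpha')/(-\alpha') = (2-p+\alpha)/(p-\alpha-1)$, as required.

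I do not anticipate a hard step: the entire proof rests on the observation that the Schwarz--Pick pointwise bound on $(1-|z|)|B'(z)|$ absorbs exactly the right power of $(1-|z|)$ to trade the exponent $p$ for $1$ at the cost of raising the weight from $\alpha$ to $1+\alpha-p$, and that this shift is precisely what matches the Ahern--Clark threshold to the one in the statement. The only thing to check carefully is the restricted range $3/2+\alpha < p \le 1$, which is exactly what keeps $\alpha'$ inside the Ahern--Clark admissible interval $(-1,-1/2)$.
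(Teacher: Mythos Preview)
Your argument is correct and takes a genuinely different route from the paper. The paper does not reduce to the Ahern--Clark $p=1$ theorem; instead it reruns their entire machinery for general $p$: it introduces $\beta_0 = \inf\{\beta : \sum_n(1-|a_n|)^\beta < \infty\}$, invokes Ahern's characterization $B'\in A^p_\alpha \Leftrightarrow \int\!\!\int\big(\tfrac{1-|B|}{1-r}\big)^p(1-r)^\alpha\,d\theta\,dr<\infty$, expands $(1-|B|^2)/(1-|z|^2)=\sum_n|B_n|^2(1-|a_n|^2)/|1-\bar a_n z|^2$, uses the $\ell^{1/p}$--triangle inequality (valid for $p\le 1$), and carries out explicit kernel estimates over annuli $r_n<r<1$ to bound the integral below by $\sum_n(1-|a_n|)^{(2-p+\alpha\beta+\alpha+\beta)/p}$; letting $\beta\downarrow\beta_0$ then yields the threshold inequality for $\beta_0$.

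Your reduction is considerably shorter and more conceptual: the Schwarz--Pick bound $|B'(z)|\,(1-|z|)\lesssim 1$ is exactly what trades the integrability exponent $p$ for $1$ while shifting the weight from $\alpha$ to $\alpha'=\alpha+1-p$, and the algebra $\tfrac{2-p+\alpha}{p-\alpha-1}=\tfrac{1+\alpha'}{-\alpha'}$ together with the range check $-1<\alpha'<-1/2$ (equivalent to $3/2+\alpha<p\le1$) makes the Ahern--Clark theorem apply verbatim. The paper's approach, by contrast, is self-contained and exposes the full mechanism of the $\beta_0$ bootstrap, which may be useful if one wants to push the method to settings where no $p=1$ result is already on the shelf; but for the theorem as stated, your argument is cleaner and loses nothing.
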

\begin{proof}
Note that for $p \le 1$ and $-1 < \alpha < -1/2$, we have $p > 3/2 + \alpha$ if and only if $\frac{2 - p + \alpha}{p - \alpha -1} < 1$. Thus, condition~\eqref{betaSum} is stronger than $\{a_n\}$ just being a Blaschke sequence. Following Ahern and Clark, we let $\beta_0 = \inf\left\{\beta : \sum_n (1-|a_n|)^{\beta} < \infty \right\}$.  Pick any $\beta$ such that  $\beta_0 \le \beta \le 1$ and $\sum_n (1-|a_n|)^{\beta} < \infty$, and let $\rho_n = (1 - |a_n|)^{\beta - 1}$. Then $\rho_n \ge 1$ for all $n = 1, 2, \ldots$ . Also, $\epsilon_0 \equiv \prod_n |a_n|^{\rho_n} > 0$ since $\sum \rho_n (1-|a_n|) = \sum_n (1-|a_n|)^{\beta} < \infty$. For each $n = 1, 2, \ldots$, set $B_n(z) = \prod_{j=1}^{n-1} (z-a_j)/(1-\bar{a}_jz)$. Note that 
\[ \frac{r - |a_j|}{1 - |a_j|r} \ge |a_j|^{\rho_j} \Longleftrightarrow r \ge \frac{|a_j| + |a_j|^{\rho_j}}{1 + |a_j|^{\rho_j+1}}, \]
$0 < r < 1$. We put $r_j = (|a_j| + |a_j|^{\rho_j})/(1 + |a_j|^{\rho_j+1})$. Then for each $n = 1, 2, \ldots $ ,
\[ |B_n(z)| \ge \prod_{j=1}^{n-1} \frac{|z|-|a_j|}{1-|a_j||z|} \ge \prod_{j=1}^{n-1} |a_j|^{\rho_j} \ge \prod_{j=1}^{\infty} |a_j|^{\rho_j} = \epsilon_0 \]
for all $z$ such that $|z| \ge r_{n-1}$ since $\{r_j\}$ will be an increasing sequence if we assume, as we can, that $\{|a_j|\}$ is increasing. As pointed out by Ahern and Clark in \cite{AC2},
\[ \frac{1 - |B(z)|^2}{1 - |z|^2} = \sum_n|B_n(z)|^2 \frac{1 - |a_n|^2}{|1 - \bar{a}_nz|^2} \]
for any Blaschke product $B$. Also, Ahern proved in \cite{A2} that $B' \in A_{\alpha}^p$ if and only if
\begin{equation} \label{kernelinA} 
\int_0^1 \int_0^{2\pi} \left( \frac{1 - |B(re^{i\theta})|}{1 - r} \right)^p (1-r)^{\alpha}\,d\theta\,dr < \infty,
\end{equation}
for $\alpha > -1$ and $p > 1 + \alpha$. In addition, we will use the relation (see \cite[p. 226]{T})
\[ \int_0^{2\pi} \frac{1}{|1 - re^{i\theta}|^x} \,d\theta \asymp  \frac{1}{(1 - r)^{h(x)}} \;\; (0 < r <1)\]
where $h(x) = \max(0, x - 1)$ and $x \in (0, 1) \cup (1, \infty)$.

Now since $1/2 < p \le 1$,
\begin{align*}
\biggl\{ \int_0^1 \int_0^{2\pi} &\left(\frac{1 - |B(re^{i\theta})|}{1 - r} \right)^p (1-r)^{\alpha}\,d\theta\,dr \biggr\}^{1/p}  \\
&\gtrsim \biggl\{\int_0^1 \int_0^{2\pi} \left(\sum_n |B_n(re^{i\theta})|^2 \frac{1 - |a_n|^2}{|1 - \bar{a}_nre^{i\theta}|^2}\right)^p(1-r)^{\alpha}\,d\theta\,dr \biggr\}^{1/p} \\
&\ge \sum_n \biggl\{\int_0^1 \int_0^{2\pi} |B_n(re^{i\theta})|^{2p} \frac{(1 - |a_n|^2)^p}{|1 - \bar{a}_nre^{i\theta}|^{2p}}(1-r)^{\alpha} \,d\theta\,dr \biggr\}^{1/p} \\
&\ge \sum_n \biggl\{\int_{r_n}^1 \int_0^{2\pi} |B_n(re^{i\theta})|^{2p} \frac{(1 - |a_n|)^p}{|1 - \bar{a}_nre^{i\theta}|^{2p}}(1-r)^{\alpha} \,d\theta\,dr \biggr\}^{1/p} \\
&\gtrsim \sum_n (1 - |a_n|) \biggl\{\int_{r_n}^1(1-r)^{\alpha} \int_0^{2\pi} \frac{1}{|1 - \bar{a}_nre^{i\theta}|^{2p}} \,d\theta\,dr \biggr\}^{1/p} \\
&\gtrsim \sum_n (1-|a_n|) \biggl\{ \int_{r_n}^1\frac{(1-r)^{\alpha}}{(1-|a_n|r)^{2p-1}}\,dr \biggr\}^{1/p} \\
&\gtrsim \sum_n (1-|a_n|) \biggl\{ \int_{r_n}^1\frac{(1-r)^{\alpha}}{((1-|a_n|) + (1-r))^{2p-1}}\,dr \biggr\}^{1/p}.
\end{align*}
But letting $d_n = 1- |a_n|$, we have
\begin{multline*}
 \int_{r_n}^1\frac{(1-r)^{\alpha}}{(d_n + (1-r))^{2p-1}}\,dr = d_n\int_0^{\frac{1-r_n}{d_n}} \frac{d_n^{\alpha}t^{\alpha}}{(d_n + d_n t)^{2p-1}}\,dt =\\ d_n^{1 + \alpha - (2p-1)}\int_0^{\frac{1-r_n}{d_n}}\frac{ t^{\alpha}}{(1+t)^{2p-1}}\,dt \ge d_n^{2 + \alpha - 2p}\int_0^{\frac{1-r_n}{d_n}} \frac{t^{\alpha}}{2}\,dt = \frac{(1-r_n)^{\alpha + 1}d_n^{1-2p}}{2\alpha+2}
\end{multline*}
since $(1 - r_n)/d_n \le 1$. As shown in \cite{AC2}, $1 - r_n \ge \frac{1}{4}d_n^{1+\beta}$. Thus,
\[\biggl\{ \int_0^1 \int_0^{2\pi} \left(\frac{1 - |B(re^{i\theta})|}{1 - r} \right)^p (1-r)^{\alpha}\,d\theta\,dr \biggr\}^{1/p} \gtrsim \sum_n (1-|a_n|)^{\frac{2 - p + \alpha\beta+\alpha +\beta}{p}}. \]
Since $B' \in A_{\alpha}^p$, the last series must converge, and so $2-p + \alpha\beta+\alpha +\beta \ge p\beta_0$. Letting $\beta \to \beta_0$, we get $2-p + \alpha\beta_0+\alpha +\beta_0 \ge p\beta_0$. This implies that $\beta_0 \le \frac{2 - p + \alpha}{p - \alpha -1}$. Thus, $\sum_n (1-|a_n|)^{\beta} < \infty$ for all $\beta > \frac{2 - p + \alpha}{p - \alpha -1}$.
\end{proof}

Theorem~\ref{sufficient} does not directly handle Bergman spaces, $A^p$. However, if $B' \in A^p$ for some $p \in (3/2, 2)$, then $B' \in H^{p-1}$ (see the discussion after the proof of Theorem~\ref{theoremHp}), and so $\sum_n (1-|a_n|)^{(2-p)/(p-1)} < \infty$ by a result of Ahern and Clark in \cite{AC1}.

\section{Uniformly Discrete Sequences}

In \cite{Ki}, Kim observed that if $B$ is a Blaschke product with zeros $\{a_n\}$ such that
\begin{equation} \label{sigma1}
\sum_n (1-|a_n|)^{2-p} < \infty,
\end{equation}
 then $B' \in A^p$, $1 < p < 2$. In the opposite direction, D.~Girela, J.~Pel\'{a}ez and D.~Vukoti\'{c} proved in \cite{GPV} the following result.

\begin{citedtheoremA}
Suppose $B$ is a Blaschke product with  zeros $\{a_n\}$ that are uniformly separated. If $B' \in A^p$, then condition~\eqref{sigma1} holds, $1 < p < 2$.
\end{citedtheoremA}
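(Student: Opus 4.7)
The plan is to show, by an area-summation argument, that the integral $\iint_U |B'|^p\,dA$ dominates $\sum_n (1-|a_n|)^{2-p}$. Concretely, I would bound
\[ \|B'\|_{A^p}^p = \frac{1}{\pi}\iint_U |B'|^p\,dA \ge \frac{1}{\pi}\sum_n \iint_{D_n} |B'|^p\,dA, \]
where $D_n = \{z\in U : \rho(z,a_n) < r_0\}$ is a pseudohyperbolic disk of radius $r_0$. The parameter $r_0 > 0$ will be chosen small compared to the separation constant $\delta$ so that the disks $D_n$ are pairwise disjoint; note that~\eqref{unifsep} forces $\rho(a_m,a_n)\ge\delta$ for $m\ne n$, so this is possible. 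A standard calculation gives $\mathrm{Area}(D_n)\asymp(1-|a_n|)^2$.

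The crux is a uniform lower bound $|B'(z)|\gtrsim(1-|a_n|)^{-1}$ throughout $D_n$. The pointwise case $z=a_n$ is immediate: factoring $B=b_n C_n$ with $b_n$ the $n$-th Blaschke factor and $C_n=B/b_n$, one obtains
\[ |B'(a_n)|=|b_n'(a_n)|\cdot|C_n(a_n)|=\frac{1}{1-|a_n|^2}\prod_{k\ne n}\rho(a_n,a_k)\ge\frac{\delta}{1-|a_n|^2} \]
directly from~\eqref{unifsep}. To extend the estimate across the whole disk $D_n$, I would transplant to the origin via $\phi_{a_n}(w)=(a_n+w)/(1+\bar a_n w)$: the composition $g_n=B\circ\phi_{a_n}$ is a Blaschke product with a simple zero at $0$ and $|g_n'(0)|\ge\delta$, whose remaining zeros lie in $\{|w|\ge\delta\}$. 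On $\{|w|<\delta\}$ the function $g_n(w)/w$ is analytic, nonvanishing, bounded by $1$, and at least $\delta$ at the origin, so a Harnack estimate applied to the positive harmonic function $-\log|g_n(w)/w|$ produces a uniform lower bound on $|g_n(w)/w|$ for $|w|\le r_0$. A Cauchy estimate applied to the identity $g_n'(w)=(g_n(w)/w)+w\,(g_n(w)/w)'$ then yields a uniform lower bound on $|g_n'(w)|$, provided $r_0$ is chosen sufficiently small relative to $\delta$. Pulling back through $\phi_{a_n}$, the quantity $|B'(z)|(1-|a_n|^2)$ is bounded below uniformly (in $n$) on $D_n$.

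Combining the pieces,
\[ \iint_{D_n}|B'|^p\,dA \gtrsim (1-|a_n|)^{-p}\,(1-|a_n|)^{2} = (1-|a_n|)^{2-p}, \]
and summing over $n$ yields~\eqref{sigma1}.

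I expect the principal obstacle to be the uniform lower bound on $|B'|$ across $D_n$. The pointwise value at $a_n$ is controlled at once by~\eqref{unifsep}, but propagating this bound uniformly over a full pseudohyperbolic disk requires quantitative control of the tail product $C_n$ on a neighborhood of $a_n$, not merely at the single point $a_n$. This is a genuine use of the interpolating hypothesis rather than mere uniform discreteness: discreteness prevents other zeros from entering $D_n$ but does not by itself keep $\prod_{k\ne n}\rho(z,a_k)$ from collapsing if many zeros cluster just outside $D_n$. Once the uniform distortion estimate is in place, the remainder of the argument reduces to the routine area-summation sketched above.
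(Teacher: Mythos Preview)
Your argument is correct. The pseudohyperbolic-disk decomposition, the Harnack/Cauchy route to the uniform lower bound $|B'(z)|(1-|a_n|^2)\gtrsim 1$ on $D_n$, and the area-summation step all go through as you describe, and your closing remark rightly identifies that the full product bound $\prod_{k\ne n}\rho(a_n,a_k)\ge\delta$ is genuinely used, so the argument does not survive weakening to uniform discreteness.

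The paper, however, takes an entirely different route. Theorem~A is only cited here; the paper's own contribution is Theorem~\ref{theoremAp}, which strengthens Theorem~A by replacing ``uniformly separated'' with ``uniformly discrete''. The proof is a two-line reduction: Ahern's Theorem~5.1 in \cite{A1} gives the equivalence $B'\in A^p \Longleftrightarrow B'\in A^1_{1-p}$ for $1<p<2$, and then Theorem~D (Aleman--Vukoti\'{c}) applied with exponent $1$ and weight $1-p\in(-1,0)$ yields $\sum_n(1-|a_n|)^{2-p}<\infty$ under the mere uniform-discreteness hypothesis. So your approach is more elementary and self-contained but intrinsically tied to the interpolating condition, while the paper leans on two external black boxes to obtain a strictly stronger conclusion. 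Your argument is essentially the one one would expect in \cite{GPV}; the paper's point is precisely that the heavier machinery lets one drop the Carleson product bound.
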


In \cite{P1}, it was proven that if $B$ is a Blaschke product with zeros $\{a_n\}$ such that
\begin{equation} \label{sigma2}
\sum_n (1-|a_n|)^{1-p} < \infty,
\end{equation}
then $B' \in H^p$, $1/2 < p < 1$. On the other hand, W.~Cohn proved in \cite{C} the following result.

\begin{citedtheoremB}
Suppose $B$ is a Blaschke product with  zeros $\{a_n\}$ that are uniformly separated. If $B' \in H^p$, then condition~\eqref{sigma2} holds, $0 < p < 1$.
\end{citedtheoremB}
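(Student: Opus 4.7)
The plan is to combine two classical ingredients: a pointwise lower bound for $|B'(a_n)|$ coming from the uniform separation hypothesis, and a Carleson-type embedding of $H^p$ that is valid for every $0<p<\infty$.

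First, I would factor $B = b_n \cdot \widetilde{B}_n$, where $b_n(z) = (\bar a_n/|a_n|)(a_n-z)/(1-\bar a_n z)$ is the single Blaschke factor corresponding to $a_n$ and $\widetilde{B}_n$ is the Blaschke product formed from the remaining zeros $\{a_m : m\ne n\}$. A direct computation gives $b_n(a_n)=0$ and $|b_n'(a_n)|=1/(1-|a_n|^2)$, so
\[
|B'(a_n)| = \frac{|\widetilde{B}_n(a_n)|}{1-|a_n|^2}.
\]
The uniform separation condition~\eqref{unifsep} reads $|\widetilde{B}_n(a_n)| = \prod_{m\ne n}\rho(a_m,a_n) \ge \delta$, so $|B'(a_n)| \ge \delta/(1-|a_n|^2)$.

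Next, I would invoke the standard fact that since $\{a_n\}$ is uniformly separated, the discrete measure $\mu = \sum_n (1-|a_n|)\delta_{a_n}$ is a Carleson measure on $U$. By the Carleson--Duren embedding theorem, valid for all $p\in(0,\infty)$, every $f\in H^p$ then satisfies
\[
\sum_n (1-|a_n|)\,|f(a_n)|^p \lesssim \|f\|_{H^p}^p.
\]
Applying this with $f=B'$ and inserting the pointwise lower bound produces
\[
\sum_n (1-|a_n|)^{1-p} \;\lesssim\; \sum_n (1-|a_n|)\,|B'(a_n)|^p \;\lesssim\; \|B'\|_{H^p}^p \;<\; \infty,
\]
which is precisely~\eqref{sigma2}.

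The only real point to check is that the Carleson embedding really is available in the range $0<p<1$; this is routine, resting on the subharmonicity of $|f|^p$ and the boundedness of the non-tangential maximal operator on $H^p$ for all $p>0$, so I do not expect a genuine obstacle. One could alternatively avoid Carleson measures entirely by using subharmonicity of $|B'|^p$ on the disjoint pseudohyperbolic disks $\{z:\rho(z,a_n)<\delta_0\}$ together with a boundary integral estimate, but the Carleson route is cleanest.
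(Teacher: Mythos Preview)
Your argument is correct and is essentially the classical route (close to Cohn's original): the uniform-separation lower bound $|B'(a_n)|\ge\delta/(1-|a_n|^2)$ together with the Carleson--Duren embedding of $H^p$, valid for all $0<p<\infty$, gives $\sum_n(1-|a_n|)^{1-p}<\infty$ immediately.

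Note, however, that the paper does \emph{not} prove Theorem~B; it is quoted from Cohn~\cite{C} as background. The paper's own contribution in this direction is Theorem~\ref{theoremHp}, where the hypothesis is relaxed from uniformly separated to merely uniformly discrete, and the method there is entirely different from yours. Instead of evaluating $B'$ at the zeros, the paper uses Ahern's pointwise inequality (Theorem~6.1 of~\cite{A1}) to pass from $B'\in H^p$ to $B'\in A^1_{-p}$, and then invokes the Aleman--Vukoti\'c result (Theorem~D) for uniformly discrete zero sets in weighted Bergman spaces.

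Your approach is more elementary and self-contained for the uniformly separated case, but it does not extend to uniformly discrete zeros: the lower bound $|B'(a_n)|\ge\delta/(1-|a_n|^2)$ genuinely needs the full product condition~\eqref{unifsep}, and a merely uniformly discrete Blaschke sequence need not generate a Carleson measure. The paper's detour through $A^1_{-p}$ and Theorem~D is what buys that extra generality.
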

Actually, Cohn states this result only for $1/2 < p < 1$, but his proof clearly holds for the larger range of $p$.

In \cite{K}, M.~Kutbi, proved for any Blaschke product $B$ with zeros $\{a_n\}$ that if $\sum_n (1 - |a_n|)^{\alpha} < \infty$, then
\begin{equation} \label{almostHp1}
\int_0^{2\pi} |B'(re^{it})|^p\,dt = o\left(\frac{1}{(1-r)^{p+\alpha -1}}\right),
\end{equation}
$0 < \alpha < 1/2$ and $p > 1 - \alpha$. This implication was extended in \cite{P2} to the case of $1/2 < \alpha \le 1$ and $p \ge \alpha$. Also in \cite{P2}, the following was proved.

\begin{citedtheoremC}
Let $B$ be a Blaschke product with  zeros $\{a_n\}$ that are uniformly separated, and suppose $0 < \alpha < 1$. Suppose there exists a positive number p such that
\begin{equation} \label{almostHp2}
\int_0^{2\pi} |B'(re^{it})|^p\,dt = O\left(\frac{1}{(1-r)^{p+\alpha -1}}\right),
\end{equation}
where $p > 1 - \alpha$ if $\,0 < \alpha \le 1/2$ and $p \ge \alpha$ if $1/2 < \alpha < 1$. Then, $\sum_n (1-|a_n|)^{\alpha'} < \infty$ for all $\alpha' > \alpha$.
\end{citedtheoremC}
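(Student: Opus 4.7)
The plan is to convert the growth bound~\eqref{almostHp2} into a weighted $L^p$ area-integrability statement for $|B'|$, and then exploit the uniform separation of the zeros to obtain a pointwise lower bound for $|B'|$ near each $a_n$ that, on integration, extracts the series $\sum_n(1-|a_n|)^{\alpha'}$.

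Fix $\alpha' > \alpha$ and set $s = \alpha' + p - 2$. A short case check using the hypotheses on $(p,\alpha)$ gives $s > -1$ in both regimes: if $0 < \alpha \le 1/2$ and $p > 1-\alpha$, then $s > \alpha + (1-\alpha) - 2 = -1$; if $1/2 < \alpha < 1$ and $p \ge \alpha$, then $s > 2\alpha - 2 > -1$. Multiplying~\eqref{almostHp2} by $(1-r)^s$ and integrating in $r$ yields
\[\int_0^1\!\!\int_0^{2\pi}\!|B'(re^{it})|^p(1-r)^s\,dt\,dr \lesssim \int_0^1(1-r)^{\alpha'-\alpha-1}\,dr < \infty,\]
so $B' \in A^p_s$.

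Next, for each $n$ write $B = \phi_n\psi_n$, where $\phi_n$ is the $n$-th Blaschke factor (with zero $a_n$) and $\psi_n$ is the product of the remaining factors. The uniform separation condition~\eqref{unifsep} gives $|\psi_n(a_n)|\ge\delta$, and the Schwarz--Pick inequality $\rho(\psi_n(z),\psi_n(a_n)) \le \rho(z,a_n)$ then yields $|\psi_n(z)| \ge \delta-2c$ throughout the pseudohyperbolic disk $E_n = \{z : \rho(z,a_n) < c\}$. A short computation shows $|\phi_n'(z)| \asymp (1-|a_n|)^{-1}$ on $E_n$, while Schwarz--Pick bounds the cross term $|\phi_n(z)\psi_n'(z)| \lesssim c/(1-|a_n|)$. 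Choosing $c$ small enough (depending only on $\delta$), the product rule $B' = \phi_n'\psi_n + \phi_n\psi_n'$ produces the uniform lower bound $|B'(z)| \gtrsim (1-|a_n|)^{-1}$ throughout $E_n$.

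Finally, the M\"obius-invariant (pseudo)triangle inequality for $\rho$ together with~\eqref{unifsep} makes the $\{E_n\}$ pairwise disjoint once $c$ is small. Since each $E_n$ has Euclidean area $\asymp(1-|a_n|)^2$ and $(1-|z|)\asymp(1-|a_n|)$ on $E_n$, integrating termwise gives
\[\infty > \|B'\|_{A^p_s}^p \gtrsim \sum_n\int_{E_n}|B'|^p(1-|z|)^s\,dA \gtrsim \sum_n(1-|a_n|)^{s+2-p} = \sum_n(1-|a_n|)^{\alpha'},\]
as desired. The main technical obstacle is uniformity in $n$: the radius $c$ must be fixed once, depending only on $\delta$, to simultaneously guarantee disjointness of the disks, the lower bound $|\psi_n| \ge \delta/2$ on $E_n$, and the absorption of the cross term; fortunately all the key estimates flow from Schwarz--Pick with universal constants.
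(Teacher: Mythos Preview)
Your proof is correct. Note, however, that the paper does not itself prove Theorem~C: it is quoted as a known result from \cite{P2}. What the paper \emph{does} prove is Theorem~\ref{theoremAlmostHp}, the extension of Theorem~C from uniformly separated to uniformly discrete zeros (with the minor tightening $p\le 1$ and strict $p>\alpha$ when $1/2<\alpha<1$).

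Comparing with that proof: your first step---multiplying the hypothesis by $(1-r)^{\alpha'+p-2}$ and integrating in $r$ to land in $A^p_{\alpha'+p-2}$---is exactly the paper's first step. After that the paper simply invokes Theorem~D (Aleman--Vukoti\'c) as a black box, whereas you carry out a direct, self-contained extraction of $\sum_n(1-|a_n|)^{\alpha'}$ via Schwarz--Pick lower bounds on disjoint pseudohyperbolic disks. Your argument is in effect an elementary proof of the uniformly-separated special case of Theorem~D; it has the advantage of placing no upper restriction on $p$, but it genuinely uses the full interpolation hypothesis through the bound $|\psi_n(a_n)|\ge\delta$, so unlike the paper's route it does not immediately upgrade to merely uniformly discrete zero sequences.
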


It is the purpose of this section to extend Theorem~A, Theorem~B, and Theorem~C to cover Blaschke products whose zeros are uniformly discrete. This will be done using the following recent result by A.~Aleman and D.~Vukoti\'{c}, which appears in a more generalized form as Theorem 2(ii) in \cite{AV}.
\begin{citedtheoremD}
Let $B$ be a Blaschke product with  zeros $\{a_n\}$ that are uniformly discrete. Suppose $1/2 < p \le 1$ and $-1 < \alpha < 2p - 2$. If $B' \in A^p_{\alpha}$, then
\begin{equation} \label{sigma3}
\sum_n (1-|a_n|)^{2-p+\alpha} < \infty.
\end{equation}
\end{citedtheoremD}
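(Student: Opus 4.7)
The plan is to combine Ahern's integral characterization of $B' \in A_\alpha^p$ (the criterion \eqref{kernelinA} invoked in the proof of Theorem~\ref{sufficient}) with the geometric fact that uniformly discrete zeros sit inside pairwise disjoint pseudohyperbolic disks on which $1 - |B(z)|$ admits a trivial uniform lower bound. First, I would note that the hypotheses $1/2 < p \le 1$ and $-1 < \alpha < 2p - 2$ force $\alpha < 2p - 2 \le p - 1$, so $p > 1 + \alpha$ and Ahern's criterion applies: assuming $B' \in A_\alpha^p$,
\[
\iint_U \bigl(1 - |B(z)|\bigr)^p (1 - |z|)^{\alpha - p}\, dA(z) < \infty.
\]

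Next, let $\delta > 0$ be the separation constant of $\{a_n\}$ and set $D_n = \{z \in U : \rho(z, a_n) < \delta/2\}$. The M\"obius form of the triangle inequality for $\rho$ shows that the disks $D_n$ are pairwise disjoint. On $D_n$ one has the trivial bound
\[
|B(z)| = \prod_j \rho(z, a_j) \le \rho(z, a_n) < \delta/2,
\]
since every other factor has modulus at most $1$; hence $1 - |B(z)| \gtrsim 1$ there. The standard estimates on a pseudohyperbolic disk give $1 - |z| \asymp 1 - |a_n|$ and $\mathrm{area}(D_n) \asymp (1 - |a_n|)^2$, and combining these yields
\[
\iint_{D_n} \bigl(1 - |B(z)|\bigr)^p (1 - |z|)^{\alpha - p}\, dA(z) \gtrsim (1 - |a_n|)^{\alpha - p}(1 - |a_n|)^2 = (1 - |a_n|)^{2 - p + \alpha}.
\]

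Summing over $n$ and using pairwise disjointness then gives
\[
\sum_n (1 - |a_n|)^{2 - p + \alpha} \lesssim \iint_U \bigl(1 - |B(z)|\bigr)^p (1 - |z|)^{\alpha - p}\, dA(z) < \infty,
\]
which is precisely \eqref{sigma3}. The only real obstacle is verifying the standard geometric estimates on $D_n$ (pairwise disjointness, together with the local sizes of $1 - |z|$ and of the area element), but these are routine consequences of the M\"obius invariance of $\rho$. What is worth highlighting is that the interaction between distinct zeros is never used: the proof discards every factor of $B$ except the one corresponding to $a_n$, and this coarseness is exactly why uniform discreteness—rather than the stronger uniform separation assumption appearing in Theorems~A, B, and~C—suffices.
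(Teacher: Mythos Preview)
Your argument is correct, but note that the paper does \emph{not} supply a proof of Theorem~D: it is quoted as a black-box result of Aleman and Vukoti\'{c} \cite{AV} (Theorem~2(ii) there, in a more general weighted setting), and is then used as input to Theorems~\ref{theoremAp}, \ref{theoremHp}, and~\ref{theoremAlmostHp}. So there is no ``paper's own proof'' to compare against.

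That said, your proposal is a clean, self-contained derivation. The parameter check $\alpha < 2p-2 \le p-1$ (using $p\le 1$) does give $p>1+\alpha$, so Ahern's criterion \eqref{kernelinA} applies. The disjointness of the pseudohyperbolic disks $D_n$ follows from the ordinary triangle inequality for $\rho$ (no need for the stronger M\"obius form); the bound $|B(z)|\le \rho(z,a_n)<\delta/2$ on $D_n$ is immediate from dropping all other factors; and the estimates $1-|z|\asymp 1-|a_n|$ and $\operatorname{area}(D_n)\asymp (1-|a_n|)^2$ on a fixed-radius pseudohyperbolic disk are standard. Summing over disjoint $D_n$ then yields \eqref{sigma3}. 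Your closing remark is apt: only the single factor $\rho(z,a_n)$ is ever used, which is exactly why uniform discreteness (rather than the interpolation condition \eqref{unifsep}) suffices. This is very much in the spirit of the Aleman--Vukoti\'{c} argument, though their Theorem~2 is formulated for general normal weights and so carries more overhead than your direct treatment of the power weight $(1-r)^\alpha$.
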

We note that in Theorem 3 of \cite{P2}, the condition $p > (\alpha + 2)/2$ for $\alpha < 0$ is mistakenly omitted when stating that  \eqref{sigma3} implies $B' \in A^p_{\alpha}$.

\begin{theorem} \label{theoremAp}
Suppose $B$ is a Blaschke product with  zeros $\{a_n\}$ that are uniformly discrete. If $B' \in A^p$, then condition~\eqref{sigma1} holds, $1 < p < 2$.
\end{theorem}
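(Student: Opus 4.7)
The plan is to combine Ahern's integral characterization of $B' \in A^p_\alpha$ (already cited in the proof of Theorem~\ref{sufficient}) with the disjointness of a natural family of pseudohyperbolic disks coming from uniform discreteness. With $\alpha = 0$ the hypothesis $p > 1 + \alpha$ reads $p > 1$, so Ahern's theorem turns the assumption $B' \in A^p$ into
\[ I := \int_0^1 \int_0^{2\pi} \left(\frac{1 - |B(re^{i\theta})|}{1-r}\right)^p d\theta\, dr < \infty, \]
and the proof reduces to establishing the lower bound $I \gtrsim \sum_n (1-|a_n|)^{2-p}$.

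To that end, let $\delta \in (0,1]$ be the uniform discreteness constant of $\{a_n\}$ and put $D_n = \{z \in U : \rho(z,a_n) < \delta/2\}$. The M\"obius-sum inequality for $\rho$ implies that these pseudohyperbolic disks are pairwise disjoint. The key observation is that $1-|B|$ admits a uniform lower bound on each $D_n$: the $n$-th Blaschke factor evaluated at $z$ has modulus $\rho(z,a_n) < \delta/2$, while every other factor has modulus at most $1$, so $|B(z)| < \delta/2$, giving $1 - |B(z)| \ge 1 - \delta/2$. Crucially, this uses only uniform discreteness, not the uniform separation needed to control $\prod_{m\ne n}\rho(a_m,a_n)$ as in Theorem~A.

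It remains to estimate the contribution of each $D_n$ to $I$. Since $D_n$ is the M\"obius image of the disk $\{|w|<\delta/2\}$, it is a Euclidean disk; for $|a_n|$ bounded away from $0$ (which, by disjointness of the $D_n$, fails for at most finitely many indices) its Euclidean radius is $\asymp 1 - |a_n|$ and it lies near $a_n$. Consequently $1 - r \asymp 1-|a_n|$ on $D_n$ and its $d\theta\,dr$-measure is $\asymp (1-|a_n|)^2$, so
\[ \int_{D_n} \left(\frac{1-|B|}{1-r}\right)^p d\theta\, dr \gtrsim (1-|a_n|)^{2-p}. \]
Summing over the disjoint $D_n$ then yields $I \gtrsim \sum_n (1-|a_n|)^{2-p}$, which is the desired conclusion.

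The only real obstacle is the routine geometric bookkeeping for $D_n$ (its size, location, and the variation of $1-r$ on it); the conceptual content of the theorem is entirely captured by the observation that $1-|B|$ has a uniform lower bound on each $D_n$ as soon as the zeros are merely uniformly discrete, so one does not need to invoke, or reprove, anything like Theorem~D or Theorem~A in this range of $p$.
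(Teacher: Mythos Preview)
Your argument is correct, but it is not the route taken in the paper. The paper's proof is a two-line reduction: first Ahern's Theorem~5.1 in \cite{A1} gives $B' \in A^p \iff B' \in A^1_{1-p}$ for $1<p<2$, and then Theorem~D (Aleman--Vukoti\'{c}) applied with exponent $1$ and weight $1-p$ yields $\sum_n(1-|a_n|)^{2-p}<\infty$ directly. You instead invoke Ahern's integral characterization \eqref{kernelinA} with $\alpha=0$ and bound the integral from below by summing over disjoint pseudohyperbolic disks $D_n$ around the zeros, using only that $|B|\le\rho(\cdot,a_n)<\delta/2$ on $D_n$ and the standard facts that $D_n$ has Euclidean radius $\asymp 1-|a_n|$ and that $1-r\asymp 1-|a_n|$ there. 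This is a genuinely more elementary and self-contained proof: it avoids both the $A^p\leftrightarrow A^1_{1-p}$ equivalence and the Aleman--Vukoti\'{c} black box, and it makes transparent exactly where uniform discreteness (as opposed to uniform separation) enters. The paper's proof is shorter on the page but leans on heavier external results; yours could in fact serve as a direct proof of the relevant case of Theorem~D itself.
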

\begin{proof}
In Theorem~5.1 of \cite{A1}, Ahern proved that $B' \in A^p$ if and only if $B' \in A_{1-p}^1$, $1 < p < 2$. We now are able to apply Theorem D since $-1<1-p<0$. Condition~\eqref{sigma1} holds since $2 - 1 + (1-p) = 2-p$.
\end{proof}

\begin{theorem} \label{theoremHp}
Suppose $B$ is a Blaschke product with  zeros $\{a_n\}$ that are uniformly discrete. If $B' \in H^p$, then condition~\eqref{sigma2} holds, $0 < p < 1$.
\end{theorem}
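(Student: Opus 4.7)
The approach I would take mirrors the proof of Theorem~\ref{theoremAp}: transfer the hypothesis $B' \in H^p$ to an equivalent statement $B' \in A^1_{\alpha}$ with $-1 < \alpha < 0$, then apply Theorem~D with its index $p$ taken to be $1$. To land on the desired summability exponent, one should take $\alpha = -p$, since $-1 < -p < 0 = 2 \cdot 1 - 2$ lies in the allowable range of Theorem~D and $2 - 1 + \alpha = 1 - p$ is exactly the exponent in condition~\eqref{sigma2}.

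The crux is therefore the Hardy--Bergman equivalence
\[
B' \in H^p \quad \Longleftrightarrow \quad B' \in A^1_{-p}, \qquad 0 < p < 1,
\]
for Blaschke products, which is the natural analogue of Ahern's equivalence $B' \in A^p \Leftrightarrow B' \in A^1_{1-p}$ from Theorem~5.1 of~\cite{A1} used in Theorem~\ref{theoremAp}. The matching of critical exponents forces the parameters: the quantity $2-q+\alpha$ equals $2-p$ in the Bergman case (matching Theorem~A) and should equal $1-p$ in the Hardy case (matching Cohn's Theorem~B), pinning down $q = 1$, $\alpha = -p$.

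The sufficient direction $A^1_{-p} \Rightarrow H^p$ is closely related to the main theorem of~\cite{P1}; the direction actually required here is $H^p \Rightarrow A^1_{-p}$. To establish it, I would use Ahern's integral characterization~\eqref{kernelinA} cited in the proof of Theorem~\ref{sufficient}, which rewrites $B' \in A^1_{-p}$ as the finiteness of $\int_0^1(1-r)^{-1-p}\int_0^{2\pi}(1-|B(re^{i\theta})|)\,d\theta\,dr$, and combine it with the Hardy-space form $\sup_r \int_0^{2\pi}((1-|B(re^{i\theta})|)/(1-r))^p\,d\theta < \infty$ derived from $B' \in H^p$. The main obstacle is that the naive pointwise bound $(1-|B|) \le (1-|B|)^p$ yields only $\int(1-|B|)\,d\theta = O((1-r)^p)$, for which the resulting radial integral diverges logarithmically; some additional cancellation must be exploited, for instance by combining this pointwise-$L^p$ estimate with the averaged rate $\int(1-|B|)\,d\theta = O(1-r)$ (valid for inner functions) via H\"older with a carefully chosen exponent. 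A fallback, should this equivalence prove elusive in the stated form, is to establish $B' \in H^p \Leftrightarrow B' \in A^{1+p}$ for $p \in (1/2, 1)$ (hinted at by the discussion after Theorem~\ref{sufficient}) and invoke Theorem~\ref{theoremAp}, at the price of treating the range $0 < p \le 1/2$ by a separate argument.
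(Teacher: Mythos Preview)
Your overall strategy matches the paper's exactly: pass from $B'\in H^p$ to $B'\in A^1_{-p}$ and then invoke Theorem~D with exponent~$1$ and weight~$\alpha=-p$, so that $2-1+(-p)=1-p$ is precisely the exponent in~\eqref{sigma2}. The difference lies in how the implication $H^p\Rightarrow A^1_{-p}$ is obtained. The paper does not touch the kernel characterization~\eqref{kernelinA} here; it simply quotes Theorem~6.1 of~\cite{A1}, the pointwise-in-$t$ inequality
\[
\int_0^1 |B'(re^{it})|\,(1-r)^{-p}\,dr \;\le\; \frac{4}{p(1-p)}\,|B'(e^{it})|^p,\qquad 0<p<1,
\]
and integrating over $t$ yields $\|B'\|_{A^1_{-p}}\lesssim\|B'\|_{H^p}^p$ in one line, for the entire range $0<p<1$.

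Your route through~\eqref{kernelinA} is both unnecessary and shaky: the ``averaged rate'' $\int_0^{2\pi}(1-|B(re^{i\theta})|)\,d\theta = O(1-r)$ you invoke is not a general fact about inner functions. Were it true, plugging it into~\eqref{kernelinA} with $p=1$ and $\alpha=-\epsilon$ would place $B'$ in $A^1_{-\epsilon}$ for every $\epsilon\in(0,1)$ with no hypothesis whatsoever, which is certainly false for an arbitrary Blaschke product. So the H\"older interpolation you sketch is missing a valid second ingredient. Your fallback via $B'\in A^{p+1}$ and Theorem~\ref{theoremAp} does work, but only for $1/2<p<1$; Ahern's pointwise inequality above handles all of $0<p<1$ at once.
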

\begin{proof}
In Theorem~6.1 of \cite{A1}, Ahern proved for $0 < p < 1$ that
\[ \int_0^1 |B'(re^{it})|(1-r)^{-p}\,dr \le \frac{4}{p(1-p)}|B'(e^{it})|^p. \]
Thus, $B' \in H^p$ implies that $B' \in A_{-p}^1$. As in the last proof, it follows that \eqref{sigma2} holds. 
\end{proof}
We should note that Theorem~5.1 and Theorem~6.1 from \cite{A1} combine together to prove that $B' \in H^p$ implies $B' \in A^{p+1}$ for all $0 < p < 1$. On the other hand, Theorem~5.1 and Theorem~6.2 from \cite{A1}  prove that  $B' \in A^{p+1}$ implies $B' \in H^p$ for all $1/2 < p  < 1$. This last implication does not extend to $0 < p < 1$, as shown for $p = 1/2$ by Ahern and Clark in \cite{AC2} and  for $0 < p \le 1/2$ by Pel\'{a}ez in \cite{Pe}. Thus, Theorem~\ref{theoremAp} implies Theorem~\ref{theoremHp}, but Theorem~\ref{theoremHp} does not imply Theorem~\ref{theoremAp} for the full range of values of $p$.

In \cite{AC1}, Ahern and Clark proved that if $B$ is any Blaschke product with $B'  \in H^p$, then every subproduct of $B$ has its derivative in $H^p$, $0 < p < 1$. We will also need the corresponding result for $A^p_{\alpha}$ spaces.

\begin{lemma}
For any $\alpha > -1$ and $p > 1 + \alpha$, if $B$ is a Blaschke product such that  $B'  \in A^p_{\alpha}$, then every subproduct of $B$ also has its derivative in $A^p_{\alpha}$.
\end{lemma}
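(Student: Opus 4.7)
The plan is to reduce the statement to a pointwise comparison of $1-|B|$ and $1-|B_1|$, where $B_1$ denotes the subproduct, and then to apply the integral characterization of $A^p_\alpha$-membership for $B'$ that was already invoked as equation \eqref{kernelinA}.

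Write $B = B_1 B_2$, where $B_2$ is the Blaschke product formed from the zeros of $B$ omitted in passing to the subproduct $B_1$. Since $|B_2(z)| \le 1$ throughout $U$, we have $|B(z)| = |B_1(z)||B_2(z)| \le |B_1(z)|$, hence the pointwise inequality
\[ 1 - |B_1(re^{i\theta})| \le 1 - |B(re^{i\theta})|, \qquad 0 \le r < 1, \quad 0 \le \theta \le 2\pi. \]
Raising to the $p$-th power, dividing by $(1-r)^p$, multiplying by $(1-r)^\alpha$, and integrating over the disc preserves the inequality, so
\[ \int_0^1 \int_0^{2\pi} \left(\frac{1-|B_1(re^{i\theta})|}{1-r}\right)^p (1-r)^\alpha\, d\theta\, dr \le \int_0^1 \int_0^{2\pi} \left(\frac{1-|B(re^{i\theta})|}{1-r}\right)^p (1-r)^\alpha\, d\theta\, dr. \]

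Under the hypothesis $\alpha > -1$ and $p > 1 + \alpha$, Ahern's characterization \eqref{kernelinA} applied to $B$ shows that the right-hand integral is finite, whence the left-hand integral is finite. Applying the same characterization in the reverse direction to $B_1$ (the hypotheses on $p$ and $\alpha$ being identical) yields $B_1' \in A^p_\alpha$, which is the desired conclusion.

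There is no serious obstacle here: the whole argument rests on the trivial fact that multiplying by a function bounded by $1$ can only decrease the modulus, combined with the two-sided nature of Ahern's characterization. This mirrors exactly the analogous $H^p$ subproduct stability observed by Ahern and Clark in \cite{AC1}, with $(1-r)^\alpha\, dr$ simply replacing boundary measure in their argument.
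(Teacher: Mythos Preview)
Your argument is correct and is exactly the approach taken in the paper: both use the pointwise inequality $1-|B_1|\le 1-|B|$ (since $|B_1|\ge |B|$ for any subproduct) together with the two-sided characterization~\eqref{kernelinA}. The paper merely compresses this into the single phrase ``condition~\eqref{kernelinA} holds for $B$, and so also for $\tilde{B}$,'' whereas you spell out the comparison explicitly.
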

\begin{proof}
Suppose $B' \in A^p_{\alpha}$ and $\tilde{B}$ is any subproduct of $B$. Then condition~\eqref{kernelinA} holds for $B$, and so also for $\tilde{B}$. Thus, $\tilde{B}' \in A^p_{\alpha}$.\end{proof}

It follows easily from the lemma and from Ahern and Clark's result that Theorem~\ref{theoremAp} and Theorem~\ref{theoremHp} can be extended to Blaschke products whose zeros are thought of as unions of finitely many uniformly discrete sequences. Whether the content of each theorem can be extended to any even larger class of sequences of zeros is an open question. We note that some condition on a sequence $\{a_n\}$ beyond it being a Blaschke sequence is necessary; in \cite{AC1}, Ahern and Clark show that for each $p \in (1/2,1)$ there exists a Blaschke product $B$ such that $B' \in H^p$ but $\sum_n (1 - |a_n|)^{\beta} = \infty$ for all $\beta < (1-p)/p$.

\begin{theorem} \label{theoremAlmostHp}
Let $B$ be a Blaschke product with  zeros $\{a_n\}$ that are uniformly discrete, and suppose $0 < \alpha < 1$. Suppose there exists a positive number $p \le 1$ such that
\begin{equation}
\int_0^{2\pi} |B'(re^{it})|^p\,dt = O\left(\frac{1}{(1-r)^{p+\alpha -1}}\right),
\end{equation}
where $p > 1 - \alpha$ if $\,0 < \alpha \le 1/2$ and $p > \alpha$ if $1/2 < \alpha < 1$. Then, $\sum_n (1-|a_n|)^{\alpha'} < \infty$ for all $\alpha' > \alpha$.
\end{theorem}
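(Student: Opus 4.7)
The plan is to reduce Theorem~\ref{theoremAlmostHp} to Theorem~D by showing that the hypothesized growth of the integral means of $|B'|^p$ forces $B'$ into a suitable weighted Bergman space $A^p_\gamma$. Once inside such a space, Theorem~D produces a convergent series, from which I can read off the desired convergence of $\sum_n(1-|a_n|)^{\alpha'}$ for every $\alpha'>\alpha$.

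First, I would multiply the pointwise bound $\int_0^{2\pi}|B'(re^{it})|^p\,dt\le C(1-r)^{1-p-\alpha}$ by $(1-r)^\gamma$ and integrate in $r$. Since $\int_0^1(1-r)^{\gamma+1-p-\alpha}\,dr<\infty$ precisely when $\gamma>p+\alpha-2$, this shows that $B'\in A^p_\gamma$ for every $\gamma$ satisfying both $\gamma>p+\alpha-2$ and $\gamma>-1$. A brief case check confirms that the hypotheses imply $p>1-\alpha$ (immediate when $0<\alpha\le 1/2$; and when $1/2<\alpha<1$, one has $p>\alpha>1/2>1-\alpha$), so the constraint $p+\alpha-2>-1$ is automatic and only $\gamma>p+\alpha-2$ is binding.

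Next, given any $\alpha'>\alpha$, I would choose a real number $\beta$ with $\alpha<\beta<\min(\alpha',p)$ and set $\gamma:=\beta+p-2$. The inequality $\beta<p$ gives $\gamma<2p-2$, while $\beta>\alpha>1-p$ gives $\gamma>-1$; combined with $1/2<p\le 1$ (also automatic), this places $\gamma$ inside the admissible range of Theorem~D. Moreover $\gamma>p+\alpha-2$, so $B'\in A^p_\gamma$ by the first step. Theorem~D then yields $\sum_n(1-|a_n|)^{2-p+\gamma}=\sum_n(1-|a_n|)^{\beta}<\infty$, and since $\beta<\alpha'$ and $0<1-|a_n|<1$, this forces $\sum_n(1-|a_n|)^{\alpha'}<\infty$, as required.

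The main obstacle is not a deep estimate but the parameter bookkeeping in the middle step: one must verify that the two clauses of the hypothesis ($p>1-\alpha$ when $0<\alpha\le 1/2$, and $p>\alpha$ when $1/2<\alpha<1$) are exactly what is needed to produce a nonempty open interval of valid $\beta$, or equivalently, an admissible $\gamma$ lying simultaneously strictly above $p+\alpha-2$ and strictly below $2p-2$. The two clauses play asymmetric roles—the first directly guarantees $\gamma>-1$, while the second directly guarantees room between $\alpha$ and $p$—and both must be invoked to cover the full range $0<\alpha<1$.
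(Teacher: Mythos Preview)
Your proposal is correct and follows essentially the same route as the paper: multiply the growth bound by a suitable power of $(1-r)$, integrate to land in a weighted Bergman space $A^p_\gamma$, verify the parameter inequalities, and invoke Theorem~D. The only cosmetic difference is that the paper handles the constraint $\alpha'<p$ by a ``without loss of generality'' reduction on $\alpha'$ itself, whereas you introduce an auxiliary $\beta\in(\alpha,\min(\alpha',p))$ and then pass from $\beta$ to $\alpha'$ at the end.
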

\begin{proof}
For all $\alpha' > \alpha$,
\[ \int_0^{2\pi} |B'(re^{it})|^p(1 - r)^{p + \alpha' -2}\,dt = O\left((1-r)^{\alpha' - \alpha -1}\right). \]
Then,
\[ \iint_U  |B'(re^{it})|^p(1 - r)^{p + \alpha' -2}\,dA < \infty \]
since $\alpha' - \alpha > 0$. Notice that for any $0 < \alpha < 1$, we have $p > \alpha$. So, without loss of generality, we can assume that $\alpha' < p$. Then in all cases, $1/2 < p \le 1$ and $-1 < p + \alpha' - 2 < 2p - 2$. Thus, Theorem~D applies, and we conclude that $\sum_n (1-|a_n|)^{\alpha'} < \infty$.
\end{proof}

\end{document}